\documentclass[journal]{IEEEtran}
\usepackage{cite}
\usepackage{url}
\usepackage{empheq}
\usepackage{float}
\usepackage{dsfont}
\usepackage[hidelinks]{hyperref}
\usepackage[normalem]{ulem}
\usepackage{amsmath,amssymb,amsfonts}
\allowdisplaybreaks[4]
\usepackage{graphicx}
\usepackage{textcomp}
\usepackage{amsmath}
\usepackage{enumerate}
\usepackage{epstopdf}
\usepackage{array}
\usepackage{booktabs}
\usepackage{subfigure}
\usepackage{multirow}
\usepackage{soul, color}
\usepackage[usenames,dvipsnames]{xcolor}
\usepackage[latin1]{inputenc}
\usepackage{cases}

\usepackage{amsthm}
\newtheorem{theorem}{Theorem}

\soulregister\cite7 
\soulregister\citep7 
\soulregister\citet7 
\soulregister\ref7 
\soulregister\pageref7

\usepackage{bbding}

\usepackage{nomencl}
\makenomenclature
\usepackage{etoolbox}
\renewcommand\nomgroup[1]{%
  \item[\bfseries
  \ifstrequal{#1}{A}{Acronyms}{%
  \ifstrequal{#1}{S}{Symbols}{%
  \ifstrequal{#1}{U}{Units}{}}}%
]}

\usepackage{amsmath}
\allowdisplaybreaks[4]

\usepackage{algorithm}
\usepackage{algpseudocode}

\allowdisplaybreaks
\begin{document}

\title{Online Electric Vehicle Charging Control with Battery Thermal Management in Cold Environments}

\author{Xiaowei Wang, 
Yize Chen, \textit{Member, IEEE},
Yue Chen, \textit{Senior Member, IEEE}
\thanks{This work was supported in part by Natural Sciences and Engineering Research Council of Canada (NSERC).}
\thanks{Xiaowei Wang and Yue Chen are with the Department of Mechanical and Automation Engineering, The Chinese University of Hong Kong, Hong Kong, China. E-mail: xwwang@mae.cuhk.edu.hk, yuechen@mae.cuhk.edu.hk.}
\thanks{Yize Chen is with the Department of Electrical and Computer Engineering, University of Alberta, Edmonton, AB, Canada. Email: yize.chen@ualberta.ca.}
}

\maketitle

\begin{abstract}
Electric vehicle (EV) adoption in cold regions is hindered by degraded EV charging performance at low temperatures, which necessitates effective battery thermal management during charging.
Given the coupling of battery charging and heating dynamics, this paper examines the benefits of online coordinated charging and heating control, rather than performing them separately.
 Specifically, we first build queue models for both battery charging and thermal dynamics. Then, we formulate an optimization problem to minimize the system cost of a charging station, which allows us to coordinate charging and heating through maintaining queue stability. To solve the problem, we develop our online coordinated charging and heating control algorithm within the theoretical framework of Lyapunov optimization. Note that our online method is prediction-free and independent of any assumed modeling of uncertainty.
We also characterize both the feasibility and optimality of the proposed control approach. Numerical results based on real-world data demonstrate the effectiveness and robustness of our control method through comparisons.

\end{abstract}

\begin{IEEEkeywords}
 Electric vehicles, charging control, thermal management, battery, Lyapunov optimization.
\end{IEEEkeywords}

\IEEEpeerreviewmaketitle

\section{Introduction}

\IEEEPARstart{E}{lectric} 
vehicles (EVs) sales continue to grow globally to comply with the emission reduction targets. Among this increasing trend, more than half of vehicles sold in China in 2024 is EV, while Europe and the United States are also accelerating electrification, achieving a sales share of about 60\% and 20\% by 2030, respectively \cite{outlook_ev}.
One of the major challenges hindering further EV adoption is cold winter climate, as low operating temperatures adversely impact charging performance of EVs~\cite{10105947}. Indeed, low temperatures primarily affect EV adoption in two aspects: decreased driving range and limited charging rates \cite{esparza2025electric}. Cold temperature leads to higher battery internal resistance, reduced capacity and battery life, and thus degraded performance \cite{WARNER2024191}. EV driving range is also reduced significantly in cold conditions due to additional heating energy consumption \cite{9548276}. In \cite{powell2024impact}, it is shown that for every degree Celsius below the optimal temperature, the driving range declines by about 0.8\%. Such impacts can be particularly evident in regions subject to cold climates such as North America, Northeastern Asia, and North Europe.

Degraded EV charging performance is another concern in cold weather.
 EV fast charging rate can significantly decline under cold conditions \cite{MOTOAKI2018162}.
Reference \cite{tikka2021technical} reports that a Nissan Leaf shows a 25\% increase in charging time with a 33\% decrease in charged energy at -20$^\circ$C. 
Moreover, reduced driving range results in more frequent charging behaviors in cold climates due to an extra recharge need, which even increases the peak demand of power systems \cite{10105947}.


Many existing studies have concentrated on developing EV charging strategies for cost savings~\cite{EV_cost_savings}, carbon emission control~\cite{EV_carbon}, vehicle-to-grid service~\cite{EV_V2G} and flexibility~\cite{EV_fleixbility} provision, and the reduction of charging power fluctuations~\cite{EV_smooth}.
However, efforts mentioned above assume EV battery operates under perfect temperature conditions and ignore both low-temperature scenarios and battery thermal characteristics. To address the adverse effects of low temperatures on EV charging, battery heating management is proposed to improve charging performance at low temperatures~\cite{LINDGREN201637}. It is shown that cars with battery heating capability are less affected given low temperatures~\cite{tikka2021technical}.
To achieve EV charging with battery thermal management, a straightforward approach is to decouple battery charging and heating into two separate control processes without considering the coupling between thermal and charging dynamics. For instance, the battery temperature can be regulated at a fixed setpoint, while a conventional EV charging control strategy is simultaneously implemented under this preferred temperature condition.
However, the above uncoordinated EV battery charging and heating fails to determine the optimal timing and power allocation for heating based on the battery charging states, leading to increased costs~\cite{ruan_temperature}. Therefore, in view of the coupled nature of charging and heating, a natural question to ask is: 
\emph{How to implement effective EV charging and heating control in a coordinated manner for cost savings?} Despite the potential benefits of coordinated charging and heating control, there is insufficient understanding on its design. The main contribution of this work is the development of a non-trivial coordinated EV battery charging and heating control scheme.


To date, a few attempts exist for EV charging under cold climates.
Power requirements of heating and fast charging of EV battery in cold temperatures are investigated in \cite{10005127}. A temperature-aware EV battery operation model is proposed to assess the battery performance in cold weather\cite{11152525}. However, both studies only consider the pre-heating step before charging events, neglecting the temporal coupling between heating and charging.
Reference \cite{tte_bayram} designs a distributed fast EV charging control method in cold weather. Nevertheless, this work assumes that the charging rate of EVs is constant in cold conditions, thus not applicable to scenarios (e.g., workplace charging) where EVs experience long charging durations, during which the battery temperature gradually decreases due to heat loss to the ambient environment.
By taking coupling of heating, charging/discharging into account, in \cite{10027463}, an offline optimization problem is formulated to jointly solve battery thermal management, EV charging and driving in cold weather. An offline problem is also solved considering both electrical and thermal behaviors of EV battery\cite{apec_sarofim}. While effective at the individual EV level, both methods fail to consider the diverse charging behaviors (e.g, EV arrival times) of a large fleet of EVs, which is a critical source of uncertainty that cannot be ignored. Moreover, both approaches rely on perfect knowledge of future information such as ambient temperature.
Reference \cite{ruan_temperature} develops a temperature-controlled charging scheme by considering the coupling between charging and heating. Though this approach captures the behavioral heterogeneity of EVs, it still performs offline decision-making and relies on modeling system uncertainties, which typically assumes sufficient data availability.

Different from offline charging scheduling, online charging control does not require complete future information such EV behaviors and temperature conditions. Existing online charging algorithms only focus on the scenarios under perfect temperature conditions.
Examples include model predictive control (MPC) \cite{lee2021adaptive}. However, MPC still relies on predictive information, making its performance sensitive to the reliability of the forecasts. Rule-based methods (e.g., least-laxity-first and earliest-deadline-first \cite{subramanian2013real}) are computationally efficient and easy to implement. However, these algorithms rely on predefined heuristics, making them less robust to system uncertainties. Although reinforcement learning achieves online charging control without reliance on modeling of system uncertainties, it requires sufficient training to achieve satisfactory performance and cannot be easily extended to different scenarios~\cite{wang2019reinforcement}. Lyapunov optimization, which is based on queue theory, serves as a promising solution to perform online control, as it grants robustness to system uncertainties without any prior knowledge of them. Lyapunov optimization is an approach for maintaining queue stability while optimizing time-average performance under uncertain system dynamics. It turns time-average optimization problems into tractable per-slot deterministic problems using drift-plus-penalty techniques \cite{neely2010stochastic}.
Lyapunov optimization has been employed in various applications such as building temperature control \cite{zheng2014distributed} and online EV charging control \cite{jin2014optimized,zhou2018incentive}.

To summarize, in an online setting, how to design a coordinated EV battery charging and heating control algorithm that is robust against system uncertainties and independent of prior knowledge of such uncertainties is an open problem. We aim to fill the above research gap by extending the Lyapunov optimization from conventional online EV charging control to coordinated charging and heating. However, the following challenges arise, due to the additional thermal dynamics.
\begin{itemize}
    \item Beyond the uncertainties in conventional EV charging problems, the design of coordinated charging and heating is further burdened by additional temperature-related uncertainties, complicating the overall system regulation.
    \item Due to the coupling between charging and heating, the battery heating timing and power requirements vary across different cold environments. Thus, it is difficult to adaptively perform EV battery temperature control to facilitate charging process for cost savings across diverse cold conditions.
    \item Under uncertain system dynamics, establishing a systematic framework to coordinately allocate heating and charging power based on the battery's dynamics is a highly non-trivial task.
\end{itemize}

Built upon the Lyapunov optimization framework, this work aims to tackle the above challenges by developing an efficient, robust, and system-level online coordinated charging and heating control approach for a charging station.
Our contributions are summarized as follows:
\begin{itemize}
    \item We remodel both EV battery charging and thermal dynamics by queue models. Notably, we propose a queue model for EV battery temperature control, which resembles an energy storage system.
    Unlike the existing queue model for temperature control that are restricted to on-off heating~\cite{zheng2014distributed}, our model extends the temperature control to continuous inputs, allowing for a more fine-grained regulation of temperature. Moreover, 
    this model enables temperature regulation through Lyapunov optimization based storage dynamics control, allowing us to integrate temperature control into the Lyapunov optimization framework for conventional  EV charging control. 
    \item Supported by the framework of Lyapunov optimization, we systematically achieve the coordination of battery charging and heating. To be specific, we design a Lyapunov function to simultaneously characterize the charging and heating dynamics. Then, we propose a drift-plus-penalty term for coordinately controlling the charging and heating power allocation while reducing the system cost.
    \item Unlike the existing coordinated charging and heating scheme \cite{ruan_temperature} that is implemented in an offline setting with future predictive information, we propose a novel online coordinated EV charging and heating control policy under linear programming for cost savings by minimizing the drift-plus-penalty term. This policy is robust with respect to cold temperature conditions, easy to implement, prediction-free, and independent of uncertainty models. We also prove that under our proposed policy, the feasibility of the EV battery temperature constraints can be guaranteed. Numerical results based on real-world data validate the significant advantages of the proposed coordinated charging and heating control framework over existing uncoordinated online methods.
\end{itemize}

The rest of this paper is organized as follows. Section \ref{sec:formulation} introduces the system framework and model the system with mathematical notations. In section \ref{sec:online algo}, we construct queue systems and design our online control policy by Lyapunov optimization.
The numerical simulations are presented in Section \ref{sec:case} to demonstrate the effectiveness of our proposed method. Section \ref{sec:conclusion} concludes this paper.

\section{Problem Formulation}
\label{sec:formulation}
In this section, we first illustrate the charging station system, and then model both charging and thermal dynamics of the system. Finally, we formulate the problem in an offline setting.
\subsection{System Framework}
\begin{figure}[!htpb]
    \centering
    \includegraphics[width = 0.95\linewidth]{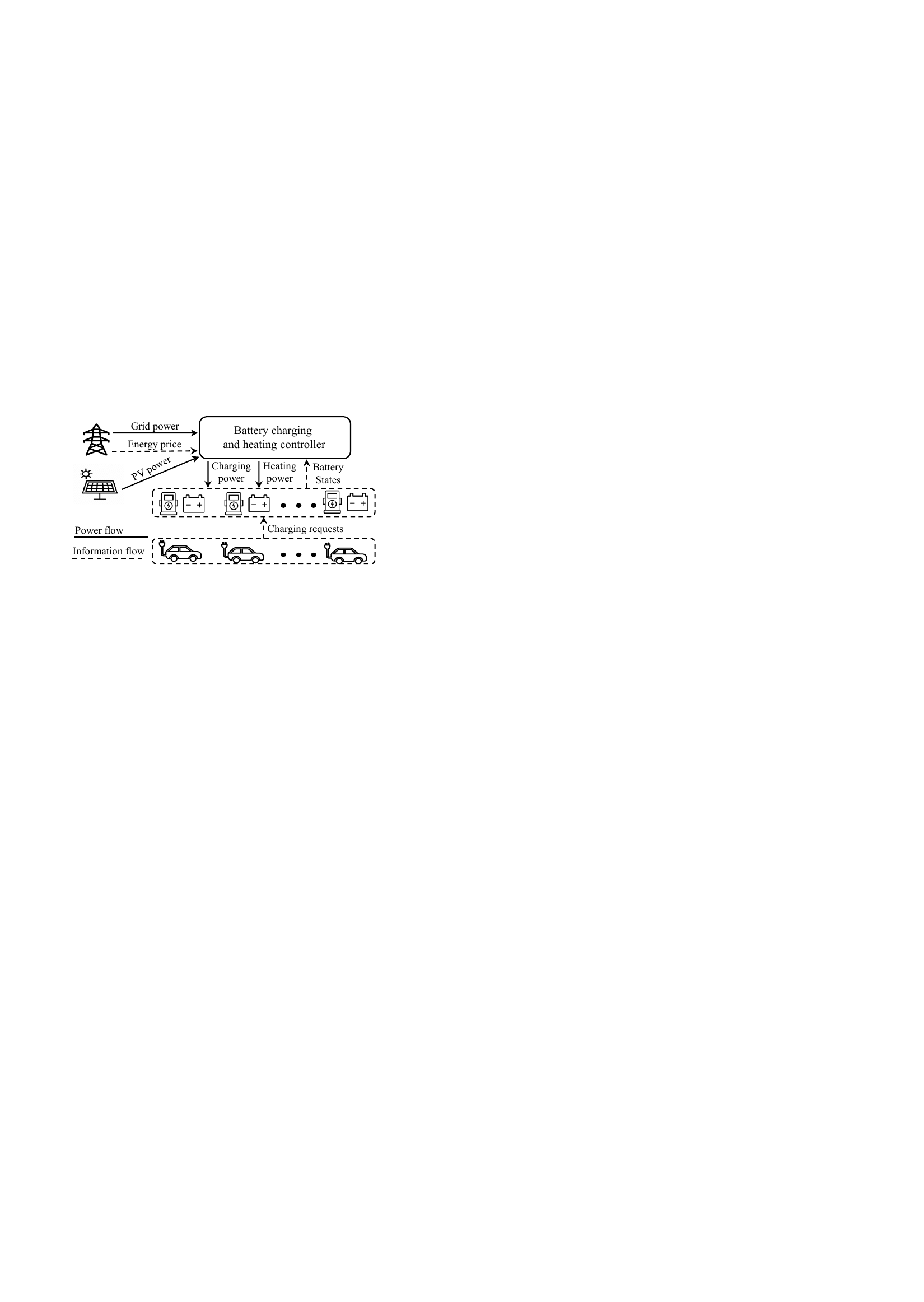} 
    \vspace{-1em}
    \caption{System model.}
    \label{fig:system_model}
\end{figure}
We consider a charging station system (see Fig.~\ref{fig:system_model}) in cold environments, where a station operator can simultaneously control both the charging and heating power of all available EVs in real time via bidirectional communication. Within this paradigm, the station operator aims to coordinate both EV battery charging and thermal dynamics, which are coupled in nature.
The charging station operates with local photovoltaic (PV) generation. Each time, when the self-generated PV power is insufficient to meet the power consumption of the entire system, the operator must purchase additional power from the grid, which incurs a cost. The goal of the station operator is to design effective coordinated EV battery charging and heating control policies to satisfy charging demands in various cold environments with minimum system costs. 

One would anticipate that the determination of such a good policy is complicated by various uncertainties under coupled system dynamics. There are multiple sources of uncertainty: EV charging sessions (including EV arrival times, charging deadlines, charging demands, and initial values of battery temperature), local PV generation, the evolution of ambient temperature, and the price the system operator pays for the energy from the grid. Here, we don't assume any prior knowledge about the stochastic behaviors of these uncertainties. Thus, each time, the system operator delivers a decision based only on the system information observed up to the current time. Under the above challenging setting, we are interested in designing a good policy to provide EV battery charging and heating services in a coordinated manner.

\subsection{System Modeling}
In this subsection, we introduce mathematical notations to describe the problem. Let $\mathcal{T}$ denote the set of discrete time slots and $\mathcal{I}$ as the set of EVs, where $t \in \mathcal{T}$ and $i \in \mathcal{I}$ index time and individual EVs, respectively. Denote by $p_{i,t}^c$ the charging power of EV $i$ at time $t$, which is restricted by its peak charging rate $\bar{p}_{i,t}^c$.
\begin{equation}
  0  \leq {p}_{i,t}^c \leq \Bar{p}_{i,t}^c, \forall{i},\forall t \in [t_i^a, t_i^d).
\end{equation}
We note that $\bar{p}_{i,t}^c$ is a time-varying parameter, which is affected by temperature conditions of EV $i$.

EV $i$'s charging session is initiated when EV $i$ arrives at the charging station, with the corresponding arrival time and initial battery energy denoted by $t_i^a$ and $E_i^{ini}$, respectively. EV $i$ specifies a deadline $t_i^d$ for its charging session with a desired battery energy $E_i^{dep}$ upon departure. It is assumed that all EVs leave at their deadlines, even if their desired state of charge (SoC) has not been met. The availability of EV charging sessions are defined as follows:
\begin{equation}
    p_{i,t}^c= 0, \forall{i},\forall{t} \notin [t_i^a,t_i^d).
\end{equation}
Let $E_{i,t}$ be the battery energy of EV $i$ at time $t$ such that
\begin{equation}
    E_{i,t_i^a} = E_i^{ini}, \forall{i}.
\end{equation}
The battery energy shall satisfy the following:
\begin{equation}\label{eq:energy_range}
    E_i^{dep} \leq E_{i,t_i^d} \leq \Bar{E}_{i}, \forall{i},
\end{equation}
where $\bar{E}_i$ is the battery energy limit of EV $i$.
The following transition function captures the system charging dynamics:
\begin{equation}
    E_{i,t+1} = E_{i,t} + \delta_c{p}_{i,t}^c\Delta t, \forall{i},\forall t \ne T
\end{equation}
where $\delta_c$ is the charging efficiency and $\Delta t$ is the time interval.

Denote by $p_{i,t}^h$ the heating power of EV $i$ at time $t$, which serves as a signal to control the heater inside EV $i$ for battery thermal management. $p_{i,t}^h$ is restricted by EV $i$'s peak heating rate $\bar{p}_{i,t}^h$, which is also time-varying and affected by temperature conditions.
\begin{equation}
    0 \leq {p}_{i,t}^h \leq \Bar{p}_{i,t}^h, \forall{i},\forall t \in [t_i^a, t_i^d).
\end{equation}
Similar to charging power control, we have
\begin{equation}
    p_{i,t}^h = 0, \forall{i},\forall{t} \notin [t_i^a,t_i^d).
\end{equation}
Let $T_{i,t}$ be the battery temperature of EV $i$ at time $t$ such that
\begin{equation}
     T_{i,t_i^{a}} = T_{i}^{ini}, \forall i,
\end{equation}
where $T_i^{ini}$ is EV $i$'s initial temperature state.
We use the transition function in \cite{ruan_temperature} to describe the EV battery heating dynamics as follows
\begin{equation}\label{eq:thermal_dynamics}
    q_i(T_{i,t+1} - T_{i,t}) = -\eta_i(T_{i,t} - T_{t}^0) + \delta_h p_{i,t}^h + (1 - \delta_c)p_{i,t}^c,
\end{equation}
where $q_i$ is EV $i$'s parameter determined by its battery mass, heat capacity and time interval; $\eta_i$ is EV $i$'s parameter determined by its thermal insulation coefficient, transfer coefficient and heat dissipation area of the battery; $\delta_h$ is the heating efficiency. Let $T_t^0$ be the ambient temperature at time $t$, and $-\eta_i(T_{i,t} - T_{t}^0)$ is associated to the heat dissipation to the external environment. $(1-\delta_c)p_{i,t}^c$ is associated to the heat generated during the charging process.
$\delta_hp_{i,t}^h$ is associated to the heating power transferred to the battery.

In cold weather, the temperature of EV $i$ shall be controlled within a preferred range to meet EV $i$'s charging demand
\begin{equation}\label{eq:temp_range}
 T_{i}^l \leq T_{i,t} \leq T_{i}^u, \forall i,t,
\end{equation}
where $T_i^l$ and $T_i^u$ are the lower and upper bounds the temperature range, respectively.

We utilize the model in~\cite{ruan_temperature} to characterize the impact of temperature on EV $i$'s peak charging and heating rate:
\begin{equation}\label{eq:temp_charging_limit}
    \Bar{p}_{i,t}^c = \Bar{p}_{i}^c + \beta_i^c T_{i,t},
\end{equation}
\begin{equation}\label{eq:temp_heating_limit}
    \Bar{p}_{i,t}^h =  \Bar{p}_{i}^h - \beta_i^h T_{i,t},
\end{equation}
where $\bar{p}_i^c$, $\bar{p}_i^h$, $\beta_i^c$ and $\beta_i^c$ are the related coefficients in above linear models. Equations \eqref{eq:temp_charging_limit} and \eqref{eq:temp_heating_limit} align with our empirical observations, indicating that lower temperatures reduce the charging rate and increase the heating rate, as captured by the temperature-dependent peak charging and heating rates.

The total power of charging and heating of EV $i$ is upper bounded by $\Bar{p}_{i}$.
\begin{equation}
p_{i,t}^c + p_{i,t}^h \leq \Bar{p}_{i}, \forall i,t.
\end{equation}

In the system level, the charging station can draw power from local PV generation and power grid. Let $p_{t}^{pv}$ be the power drawn from local PV generation. Let $p_t^g$ be the power drawn from the grid. Then, we have
\begin{equation}
     0 \leq p_{t}^{pv} \leq \Bar{p}_{t}^{pv}, p_t^g\geq 0 \; \forall t,
\end{equation}
where $\Bar{p}_{t}^{pv}$ is the upper bound of PV generation.

 The power balance of the entire system is described by:
\begin{equation}
     p_{t}^{pv}  + p_{t}^{g} =  \sum\nolimits_{i \in \mathcal{I}} ({p}_{i,t}^c + {p}_{i,t}^h), \; \forall t.
\end{equation}

 $C_t$ is the system cost of the charging station at time $t$:
\begin{equation}
    C_t = \lambda_tp_{t}^g \Delta t, \; \forall t,
\end{equation}
where $\lambda_t$ is energy price at time $t$. Since the charging station operator purchases energy from wholesale electricity market, the electricity price $\lambda_t$  is time-varying.

It should be noted that the complete model of the system has been presented without detailing the decision-making process. Before designing our desired control policies, we first approach the problem in an offline setting with the assumption of perfect future information, which leads to the following offline formulation:
\begin{equation}
    \begin{aligned}
  \mathbf{P1\colon} \min \quad   &\sum_{t = 1}^T C_t + \alpha\sum_{i \in \mathcal{I}} \left(E_{i,t_i^d} - E_i^{dep}\right)^2,\\
  &\textrm{s.t.} \quad  {(1) - (3), (5) - (16).}
    \end{aligned}
\end{equation}
In $\mathbf{P1}$, term $\alpha\sum_{i \in \mathcal{I}} \left(E_{i,t_i^d} - E_i^{dep}\right)^2$ is a penalty for violating the constraint $E_i^{dep} \leq E_{i,t}$, with $\alpha>0$. The above penalized formulation is reasonable, because it is hard to ensure strict fulfillment of all EV charging demands, especially in cold environments. Note that this penalty term will not be effective when the battery energy exceeds $E_i^{dep}$, because one can always reduce the battery energy down to $E_i^{dep}$, which would lower the energy purchase cost while making the penalty term zero.  Here, we face multiple uncertainties, including future EV charging sessions ($t_i^a$, $t_i^d$, $E_i^{ini}$, $E_i^{dep}$), their initial battery temperatures $T_i^{ini}$, local PV generation limit $\bar{p}_t^{pv}$, ambient temperature $T_t^0$ and energy prices $\lambda_t$. In practice, it is difficult to obtain accurate prediction of these uncertainties. 
Thus, in next section, we are going to develop an efficient online policy to  ``solve'' $\mathbf{P1}$ without any prior knowledge of these uncertainties.

\section{Online Algorithm Design}
\label{sec:online algo}
In this section, we first remodel the system dynamics by constructing queue models. Then, we resort to the framework of Lyapunov optimization to design our online policy.

\subsection{Queue Modeling of Thermal Dynamics}
Lyapunov optimization is a mathematical framework that is closely connected to queueing theory. We observe that the evolution of temperature specified in \eqref{eq:thermal_dynamics} is similar to a queue system. Thus, we remodel the thermal dynamics by a queue model. It is reasonable to assume that in cold environments, the lower bound of the temperature range controlled for the EV battery is higher than the ambient temperature (i.e., $T_i^l > T_t^0$), as this helps accelerate the charging rate and better satisfy the charging demands. Let $\Delta T_{i,t}^d$ be the temperature loss of each time. Then, given the ambient temperature $T_t^0$, the battery temperature will decrease from $T_{i}^u$ to $T_{i}^l$ in $K_{i,t}$ timeslots when $p_{i,t}^h = 0$, and $p_{i,t}^c = 0$:
\begin{equation}\label{eq:theral_queue_1}
    T_{i}^l = T_{i}^u - K_{i,t}\Delta T_{i,t}^d, \forall i,t.
\end{equation}
By \eqref{eq:thermal_dynamics} and $p_{i,t}^h=p_{i,t}^c=0$, we have
\begin{equation}\label{eq:theral_queue_2}
    T_{i}^l = T_t^0(1 - \zeta_i^{K_{i,t}}) + T_{i}^u\zeta_i^{K_{i,t}},
\end{equation}
where $\zeta_i = 1 - \eta_i/q_i$.

By \eqref{eq:theral_queue_1} and \eqref{eq:theral_queue_2} , we can further derive
\begin{equation}\label{eq:theral_queue_3}
    K_{i,t} = \frac{1}{\log\zeta_i}\log\frac{T_t^0 - T_{i}^l}{T_{t}^0 - T_{i}^u},
\end{equation}
and
\begin{equation}\label{eq:theral_queue_4}
    \Delta T_{i,t}^d = \frac{(T_{i}^u - T_{t}^0)(1 - \zeta_i^{K_{i,t}})}{K_{i,t}}.
\end{equation}

Combing \eqref{eq:thermal_dynamics} and \eqref{eq:theral_queue_1}-\eqref{eq:theral_queue_4}, we obtain the queue model of the battery thermal dynamics as follows:
\begin{equation}\label{eq:themal_dyna}
    T_{i,t+1} = T_{i,t} - \Delta T_{i,t}^d +  \Delta T_{i,t}^c,
\end{equation}
where $\Delta T_{i,t}^c =  (\delta_h p_{i,t}^h + (1 - \delta_c)p_{i,t}^c)/q_i$, and $\Delta T_{i,t}^d$ is time-varying.

\subsection{Queue System Construction}
\subsubsection{Deadline-Aware EV Charging Demand Queue}

We adopt the deadline-aware EV charging queue model as proposed in our previous work\cite{EV_queue}, which has been demonstrated to exhibit good and robust performance under uncertain system dynamics. The core idea of such a model is to group the EV charging demands based on EV owners' declared deadlines, so that at each time $t$, the EV charging demands with an equal remaining parking time or deadline would be allocated to the same queue. Let $\mathcal{I}_{t}^r$ be the set of EVs that share the same remaining parking time $r$ at time $t$. We note that  $r \in\{t_i^d - t,\forall t,i \}$ takes values from a discrete and finite set $\{1,\ldots,R\}$, as the remaining parking time of all EVs shall be upper bounded (i.e., $r \leq R$). Let $Q_t^r$ be the total charging demands of EVs in set $\mathcal{I}_t^r$ such that

\begin{equation}\label{eq:ev_queue_1}
    {Q_{t}^R = \sum\nolimits_{i \in \mathcal{I}_{t}^R} (E_i^{dep} - E_{i}^{ini})}.
\end{equation}
Let $\mathcal{I}_{arr,t}^r$ be the set of new EV arrivals observed during time slot $t$ with a remaining park time $r$. EVs in set $\mathcal{I}_{arr,t}^r$ are first available at time $t+1$ such that $\mathcal{I}_{arr,t}^r \subseteq \mathcal{I}_{t+1}^r$.
Then, we remodel the charging dynamics as follows:
\begin{equation}\label{eq:ev_queue_2}
    {Q_{t+1}^{r-1} =  \max \{Q_{t}^r - x_{t}^r,0\} + a_{t}^{r-1}, \forall ~2 \leq r \leq R},
\end{equation}
where
\begin{equation}\label{eq:ev_queue_a}
    {a_t^{r} = \sum\nolimits_{i \in \mathcal{I}_{arr,t}^{r}} (E_i^{dep} - E_{i}^{ini}), \forall~ 1 \leq r \leq R-1,}
\end{equation}
and
\begin{equation}\label{eq:ev_queue_x}
    x_t^r = \sum\nolimits_{i \in \mathcal{I}_{t}^r}\delta_cp_{i,t}^c \Delta t, \forall~ 1 \leq r \leq R.
\end{equation}
In  \eqref{eq:ev_queue_2}, $x_t^r$ represents the energy supplied to queue $Q_{t+1}^{r-1}$ at time $t$, which is determined by the policy we are going to design. $a_t^{r-1}$ is the new charging demands that arrive after we determine $x_t^r$ (but before we determine $x_{t+1}^{r-1}$). Hence, the information represented by $a_t^{r-1}$ is unknown when making decisions at time $t$, which means it is outside of our policy's control, despite its involvement in updating the system state. Moreover, our policy should clear the EV charging demands whenever possible, so the condition $\lim_{T \to \infty} {\mathbb{E}[Q_{T}^r]}/{T} = 0$ is imposed on EV charging demand queue $Q_t^r$ to achieve this goal, which means queue $Q_t^r$ is mean rate stable.

\subsubsection{Debt Queue}
We also construct a debt queue $Y_t$ to record the unfulfilled EV charging demands with $Y_0 = 0$, whose dynamics can be described as follows:
\begin{equation}\label{eq:debt_queue}
    Y_{t+1} = Y_{t} + Q_{t}^1 - x_{t}^1.
\end{equation}
By \eqref{eq:debt_queue}, we derive
\begin{equation}\label{eq:debt_queue_stability}
     \lim_{T \to \infty} \frac{\mathbb{E}[Y_{T}]}{T} =  \lim_{T \to \infty} \frac{\sum_{t=1}^{T-1}\mathbb{E}[Q_t^1 - x_t^1]}{T}.
\end{equation}
Equation \eqref{eq:debt_queue_stability} reveals that by controlling queue $Y_t$ to be mean rate stable (i.e., $\lim_{T \to \infty} {\mathbb{E}[Y_{T}]}/{T} = 0$), we can 
ensure that the charging decisions output by our policy complete the charging demands before the deadlines as much as possible.

\subsubsection{Virtual Temperature Queue} In this subsection, we construct a virtual temperature queue, and then relax the original temperature constraint  \eqref{eq:temp_range} into the queue stability condition of this virtual temperature queue.
For EV battery temperature control, we first relax constraint \eqref{eq:temp_range} by bounding the average battery temperature:
\begin{equation}\label{eq:ave_temp}
    T_i^l \leq \overline{T_{i,t}} \leq T_i^u,
\end{equation}
where $\overline{T_{i,t}}$ is the average EV battery temperature over time.

We construct a virtual temperature queue for each EV $i$
\begin{equation}\label{eq:temp_queue}
    H_{i,t} = T_{i,t} - \theta_i, \forall i,
\end{equation}
where $\theta_i$ is a perturbation parameter to be determined.

Obviously, queue $H_{i,t}$ evolves with dynamics as follows:
\begin{equation}\label{eq:temp_queue_dynamics}
    H_{i,t+1} = H_{i,t} - \Delta T_{i,t}^d +  \Delta T_{i,t}^c, \forall i
\end{equation}

By \eqref{eq:ave_temp} and \eqref{eq:temp_queue}, we have
\begin{equation}
    T_i^l - \theta_i \leq \overline{H_{i,t}}\leq T_i^u - \theta_i,
\end{equation}
which further implies the mean rate stability of the virtual temperature queue, i.e., $\lim_{T \to \infty} {\mathbb{E}[H_{i,T}]}/{T} = 0$.

Armed with the constructed queue models, we relax $\mathbf{P1}$ to $\mathbf{P2}$ as follows:
\begin{equation}
    \begin{aligned}
 & \mathbf{P2\colon}  \min \;  \lim_{T \to \infty} \frac{1}{T} \sum_{t = 1}^T \mathbb{E}[C_t],\\
 \quad \textrm{s.t.} \; & (1)-(3), (5)-(9), (11) - (16),\\
 &(23) - (27), H_{i,t_i^a} = T_{i,t_i^a} - \theta_i, \forall i,(31),\\
 & \text{Queues}\;Q_{t}^r\;\text{are mean rate stable}, \forall r,\\
 & \text{Queue}\;Y_t\;\text{is mean rate stable},\\
 & \text{Queues}\;H_{i,t}\;\text{are mean rate stable}, \forall i.
    \end{aligned}
\end{equation}
In $\mathbf{P2}$, we achieve approximate satisfaction of the original EV battery energy and temperature constraints \eqref{eq:energy_range} and \eqref{eq:temp_range} by maintaining queue stability. One intriguing observation of $\mathbf{P2}$ is that it allows us to coordinate battery charging and heating from the perspective of maintaining queue stability.

\subsection{Lyapunov Optimization}
Since Lyapunov optimization can ensure the queue stability constraints in $\mathbf{P2}$\cite{neely2010stochastic}, we design our control policy within the Lyapunov optimization framework.
Let $\mathbf{{\Theta}}_t$ be a vector that collects all queue backlogs. Then, we define a Lyapunov function as follows
\begin{equation}\label{eq:lya_function}
    L(\mathbf{{\Theta}}_t) \stackrel{\triangle}{=} \frac{1}{2} \left ( \sum_{r = 1}^{R} \frac{\gamma{Q_{t}^r}^2}{r+1}  + \gamma{Y_{t}}^2 + \sum_{i}H_{i,t}^2\right),
\end{equation}
where $\gamma$ is a given parameter. 
\eqref{eq:lya_function} serves as a scalar measure of $\mathbf{{\Theta}}_t$, characterizing simultaneously the charging and heating dynamics. We note that this Lyapunov function is deadline-differentiated, as the EV charging demand queues with less remaining time would be assigned higher weight parameters. This allows us to prioritize more urgent charging demands.

 Our decisions exert an effect on the value of $L(\mathbf{{\Theta}}_t)$. Thus, we use the Lyapunov drift to quantify the cost of a decision:
\begin{equation}\label{eq:drift}
    \Delta (\mathbf{{\Theta}}_t) \stackrel{\triangle}{=} \mathbb{E}[L(\mathbf{{\Theta}}_{t+1}) - L(\mathbf{{\Theta}}_t) ].
\end{equation}

By minimizing $\Delta (\mathbf{{\Theta}}_t)$, we are capable of steering the system away from heavily congested queue states, maintaining the stability of all queues \cite{neely2010stochastic}. This is exactly what we are pursuing. Actually, we minimize an upper bound of $\Delta (\mathbf{{\Theta}}_t)$ instead of the exact term, which makes the policy derivation more tractable. We proceed to find such an upper bound.

For EV charging demand queues, we have:
\begin{align*}
       & \quad \; \frac{1}{2} \left ( \sum_{r = 1}^{R} \frac{\gamma}{r+1} {Q_{t+1}^r}^2 - \sum_{r = 1}^{R}\frac{\gamma}{r+1} {Q_{t}^r}^2\right) \\
      & = \frac{1}{2}  \sum_{r = 1}^{R} \frac{\gamma}{r+1} \left ( ({\max\{Q_t^{r+1} - x_t^{r+1},0\} + a_t^{r}})^2 - {Q_{t}^r}^2 \right)  \\
      & \leq \frac{1}{2}  \sum_{r = 1}^{R} \frac{\gamma}{r+1} \left({Q_{t}^{r+1}}^2 - {Q_{t}^r}^2 +  {x_{t}^{r+1}}^2 + {a_t^r}^2 \right) \stepcounter{equation}\tag{\theequation}\label{demand_queue}\\
       & + \frac{1}{2}   \sum_{r = 1}^{R} \frac{\gamma}{r+1} \left(2 Q_{t}^{r+1}A_t^r - 2 Q_{t}^{r+1}x_{t}^{r+1} \right) .
\end{align*}
The derivation above stems from the observation that for any $q \geq 0, b \geq 0, a \geq 0$, we have the following:
\begin{equation} \label{eq:inequality_bound}
    (\max[p - b,0]+a)^2 \leq p^2 + a^2 + b^2 + 2p(a - b).
\end{equation}
For the debt queue, we have:
\begin{align*}
       & \quad \; \frac{1}{2} \gamma({Y_{t+1}}^2 - {Y_{t}}^2)\\
       &  = \frac{1}{2} \gamma \left( {(Y_{t}+Q_{t}^1 - x_{t}^1)}^2 - {Y_{t}}^2   \right) \stepcounter{equation}\tag{\theequation}\label{debt_queue} \\
       & = \frac{1}{2} \gamma \left ( ({Q_{t}^1})^2 + ({x_{t}^1})^2 - 2Q_{t}^1x_{t}^1 +2Y_{t}(Q_{t}^1 - x_{t}^1) \right).
\end{align*}
For virtual temperature queues, we have:
\begin{align*}
       & \quad \; \frac{1}{2} \sum\nolimits_i({H_{i,t+1}}^2 - {H_{i,t}}^2)\\
       &  = \frac{1}{2} \sum\nolimits_i\left( {(H_{i,t} - \Delta T_{i,t}^d + \Delta T_{i,t}^c)}^2 - {H_{i,t}}^2   \right) \stepcounter{equation}\tag{\theequation}\label{temperature_queue} \\
       & = \frac{1}{2} \sum\nolimits_i \left ( (\Delta T_{i,t}^c - \Delta T_{i,t}^d)^2 + 2H_{i,t}(\Delta T_{i,t}^c - \Delta T_{i,t}^d) \right).
\end{align*}

The cost of the energy purchase resulting from the decision should also be considered. To balance the energy cost and decision cost, we introduce the drift-plus-penalty function with the obtained upper bound of $\Delta (\mathbf{{\Theta}}_t)$:
    \begin{align*}
       & \quad \; \Delta (\mathbf{{\Theta}}_t) + VC_t \leq  VC_t \\
       & + \frac{1}{2} \left  ( \sum_{r = 1}^{R} \frac{\gamma}{r+1} ({Q_{t}^{r+1}}^2 - {Q_{t}^r}^2)  \right.\\
       & + \sum_{r = 1}^{R} \frac{\gamma}{r+1} ({a_t^r}^2 +2Q_t^{r+1}a_t^r) +\gamma {Q_{t}^1}^2 \stepcounter{equation} \tag{\theequation}\label{upper_bound}\\
       & \left. +  \sum_{r = 1}^{R} \frac{\gamma}{r} {x_{max}^{r}}^2 - 2\sum_{r = 1}^{R} \frac{\gamma}{r}Q_{t}^{r}x_{t}^{r} + 2\gamma Y_{t}(Q_{t}^1 - x_{t}^1)\right)\\
       & + \frac{1}{2} \sum\nolimits_i \max [(\Delta T_{i,max}^c)^2,(\Delta T_{i,max}^d)^2]\\
       & + \sum\nolimits_i \left ( H_{i,t}(\Delta T_{i,t}^c - \Delta T_{i,t}^d)\right).
    \end{align*}
where $V$ is a  given parameter to control the trade-off between the energy cost and queue stability. $x_{max}^r$, $\Delta T_{i,max}^c$, and $\Delta T_{i,max}^d$ are the upper bounds of $x_t^r$, $ \Delta T_{i,t}^c$, and $ \Delta T_{i,t}^d$.

\subsection{Online EV Charging Control}
We obtain our desired control policy by minimizing the term on the right side of \eqref{upper_bound}, with constant terms omitted.
\begin{align*}
     \mathbf{P3\colon}  {\min_{p_{i,t}^c,p_{i,t}^h,\forall i}}  \quad  & VC_t  - \sum_{r = 1}^{R} \left (\frac{\gamma}{r}Q_{t}^{r}x_{t}^{r} \right)- \gamma Y_{t} x_{t}^1 \\
    &  + \sum\nolimits_i \left ( H_{i,t}\Delta T_{i,t}^c\right) \stepcounter{equation}\tag{\theequation}\label{obj_func},
\end{align*}
 where $x_t^r = \sum\nolimits_{i \in \mathcal{I}_{t}^r}\delta_cp_{i,t}^c \Delta t$ and $\Delta T_{i,t}^c =  (\delta_h p_{i,t}^h + (1 - \delta_c)p_{i,t}^c)/q_i$. The problem is subject to the following physical constraints:
\begin{subequations}
    \begin{align}
    & 0 \leq {p}_{i,t}^c \leq \Bar{p}_{i,t}^c, \forall i,t \label{eq:rt_cons_1} \\ 
    & 0 \leq {p}_{i,t}^h \leq \Bar{p}_{i,t}^h, \forall i,t \label{eq:rt_cons_2} \\
    & p_{i,t}^c + p_{i,t}^h \leq \Bar{p}_{i}, \forall i,t \label{eq:rt_cons_3}\\
&0 \leq p_{t}^{pv} \leq \Bar{p}_{t}^{pv},  p_t^g \geq 0,\forall t \label{eq:rt_cons_4} \\
&   p_{t}^{pv}  + p_{t}^{g} =  \sum\nolimits_{i} ({p}_{i,t}^c + {p}_{i,t}^h), \forall t \label{eq:rt_cons_5}
    \end{align}
\end{subequations}

The above minimization problem $\mathbf{P3}$ is a linear programming with decision variables $p^c_{i,t},p^h_{i,t},\forall i$, which can be easily tackled by the optimization solvers. At each time slot, the system operator updates $Q_t^r$, $Y_t$, and $H_{i,t}$ by \eqref{eq:ev_queue_2}, \eqref{eq:debt_queue}, and \eqref{eq:temp_queue_dynamics}, updates peak charging rate $\bar{p}_{i,t}^c$ and heating rate $\bar{p}_{i,t}^h$ by \eqref{eq:temp_charging_limit} and \eqref{eq:temp_heating_limit}, observes real-time energy price $\lambda_t$ and local PV generation limit $\bar{p}_t^{pv}$, and then solves  $\mathbf{P3}$ to coordinately make charging decision $p_{i,t}^{c \ast}$ and heating decision $p_{i,t}^{h \ast}$ for all available EVs. 
We would like to mention that our policy is implemented without any prior knowledge about the stochastic behaviors of the involved uncertainties. Moreover, our policy does not require incorporating the two linear relationships \eqref{eq:temp_charging_limit} and \eqref{eq:temp_heating_limit} into the constraints of $\mathbf{P3}$, which implies that other nonlinear relationships can still work well under our policy.


\section{Theoretical Guarantee}
In this section, we explore two theoretical properties of the proposed approach: feasibility and optimality.
\subsection{Feasibility Guarantee}
The decisions made by solving $\mathbf{P3} $ may not satisfy the original temperature constraint \eqref{eq:temp_range}. However, if the parameter $\theta_i$ is selected under the guidance of the ensuing theorem, then we can guarantee that the temperature constraint holds under our policy.
 We first make the following assumptions for each EV $i$ at each time slot $t$:
\begin{equation}\label{eq:assum1}
      (1-\delta_c)\bar{p}_{i,t}^{c}/q_i - \Delta T_{i,t}^d \leq 0;
\end{equation}
\begin{equation}\label{eq:assum2}
    (\delta_h \min[\bar{p}_{i,t}^h,\bar{p}_{i,t} - \bar{p}_{i,t}^{c}] )/q_i - \Delta T_{i,t}^d \geq 0.
\end{equation}

We give some explanations on the above assumptions. \eqref{eq:assum1} indicates that, when the heating power is zero, the heat generated during EV battery charging process is always less than the heat loss to the ambient environments so that the temperature will not increase. This assumption is reasonable in a cold climate. \eqref{eq:assum2} can be seen as that, when we heat the EV battery at the available maximum heating rate $\min[\bar{p}_{i,t}^h,\bar{p}_{i,t} - \bar{p}_{i,t}^{c}]$, the battery temperature will increase. Such a condition can be easily satisfied as we should always reserve some power capacity for heating in cold climates.
The following theorem characterizes the feasibility of the proposed method.
\begin{theorem} \label{theorem_1}
    Suppose that the energy prices $\lambda_t$ are upper bounded by $\bar{\lambda}$, and $T_i^l \leq T_{i,1} \leq \theta_i + \max_t\{ \Delta T_{i,t}^c - \Delta T_{i,t}^d\} \leq T_i^u$. If $\theta_i$ and V satisfy:

\begin{equation}
    \theta_i = q_iV\bar{\lambda}\Delta t/\delta_h + \max\nolimits_t\{ \Delta T_{i,t}^d - \Delta T_{i,t}^c\} + T_{i}^l
\end{equation}
\begin{equation}
     0 < V \leq V_{max}
\end{equation}
where

$ V_{max}= \min\nolimits_i\frac{T_i^u - T_i^l- \max\nolimits_t\{ \Delta T_{i,t}^c - \Delta T_{i,t}^d\} -\max\nolimits_t\{ \Delta T_{i,t}^d - \Delta T_{i,t}^c\}}{q_i\bar{\lambda}\Delta t/\delta_i^h}$
then the temperature constraint $T_{i}^l \leq T_{i,t} \leq T_i^u, \forall i,t$ holds under the proposed policy.
\end{theorem}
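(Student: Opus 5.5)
We argue by induction on $t$, separately for each EV $i$, following the standard Lyapunov argument for storage queues. The key is to see how the sign of $H_{i,t}$ fixes the heating decision in $\mathbf{P3}$, and then to show that $T_{i,t}$ stays in a band around $\theta_i$ contained in $[T_i^l,T_i^u]$. Concretely, the inductive hypothesis is
\begin{equation*}
T_i^l \le T_{i,t} \le \theta_i + \max\nolimits_t\{\Delta T_{i,t}^c-\Delta T_{i,t}^d\},
\end{equation*}
which holds at $t=1$ by assumption. The upper end lies below $T_i^u$ exactly when $V\le V_{max}$, after substituting the formula for $\theta_i$.

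\textbf{Step 1: decision structure of $\mathbf{P3}$.} The heating variable $p_{i,t}^h$ enters the objective through two terms. The first is $H_{i,t}\delta_h p_{i,t}^h/q_i$. The second is the cost term $V\lambda_t p_t^g\Delta t$, whose marginal coefficient in $p_{i,t}^h$ lies in $[0,V\bar\lambda\Delta t]$, since PV may absorb it. Hence:
\begin{itemize}
\item If $H_{i,t}>0$, i.e. $T_{i,t}>\theta_i$, the coefficient of $p^h_{i,t}$ is strictly positive and the optimum has $p_{i,t}^h=0$.
\item If $H_{i,t}< -q_iV\bar\lambda\Delta t/\delta_h$, the coefficient is strictly negative. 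Then $p_{i,t}^h$ is pushed to its maximum feasible value, $\min[\bar p_{i,t}^h,\bar p_i-p_{i,t}^c]\ge \min[\bar p^h_{i,t},\bar p_i-\bar p^c_{i,t}]$.
\end{itemize}
There is a subtlety in the second case: the charging term also contributes $H_{i,t}(1-\delta_c)p^c_{i,t}/q_i$ and competes with heating for the coupling capacity. To handle this, I would argue that the increase in temperature is still at least $\delta_h\min[\bar p^h_{i,t},\bar p_i-\bar p^c_{i,t}]/q_i$. This holds because $p^c\le\bar p^c_{i,t}$, and the heat generated by charging adds nonnegatively.

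\textbf{Step 2: inductive bounds.} Split into three cases by the value of $T_{i,t}$.
\begin{itemize}
\item \emph{Upper bound, $T_{i,t}>\theta_i$.} Heating is zero, so by \eqref{eq:assum1} we have $\Delta T^c_{i,t}-\Delta T^d_{i,t}\le0$ and the temperature does not increase. The upper bound is preserved.
\item \emph{Upper bound, $T_{i,t}\le\theta_i$.} One step adds at most $\max_t\{\Delta T^c-\Delta T^d\}$, giving the stated upper bound directly.
\item \emph{Lower bound.} If $T_{i,t}<\theta_i-q_iV\bar\lambda\Delta t/\delta_h = T_i^l+\max_t\{\Delta T^d-\Delta T^c\}$, maximal heating together with \eqref{eq:assum2} gives $T_{i,t+1}\ge T_{i,t}\ge T_i^l$. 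Otherwise the temperature drops by at most $\max_t\{\Delta T^d-\Delta T^c\}$, so $T_{i,t+1}\ge T_i^l$.
\end{itemize}
This choice of $\theta_i$ is what makes the lower threshold line up exactly with the formula. Combining the three cases closes the induction, using $\theta_i+\max\{\Delta T^c-\Delta T^d\}\le T_i^u$, which is equivalent to $V\le V_{max}$.

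\textbf{Main obstacle.} I expect Step 1 to be the hardest part: justifying rigorously that when $H_{i,t}$ is very negative, the LP saturates heating enough that the temperature rises. The difficulty is the shared constraint \eqref{eq:rt_cons_3} and the coupling through the PV/grid split. I would first try an exchange argument. Suppose an optimal solution has $p^h_{i,t}$ below its cap. Shift $\epsilon$ of power from unused capacity (or from $p^c_{i,t}$) into $p^h_{i,t}$. The objective changes by at most $\epsilon(V\bar\lambda\Delta t+H_{i,t}\delta_h/q_i)<0$, which contradicts optimality. Handling a shift out of $p^c$ needs some care, because the charging terms $\gamma Q x$ reward charging. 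If this fails, I would fall back on bounding the realized temperature change directly. The fallback uses \eqref{eq:assum2}, with the minimum taken against $\bar p^c_{i,t}$, as a worst case over all charging levels.
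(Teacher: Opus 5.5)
Your proposal is correct and follows essentially the same route as the paper's proof in Appendix A: the same case splits ($T_{i,t}$ versus $\theta_i$ for the upper bound, $T_{i,t}$ versus $T_i^l+\max_t\{\Delta T_{i,t}^d-\Delta T_{i,t}^c\}$ for the lower bound), $p_{i,t}^{h\ast}=0$ when $H_{i,t}>0$, saturated heating $p_{i,t}^{h\ast}=\min[\bar p_{i,t}^h,\bar p_i-p_{i,t}^{c\ast}]$ when $H_{i,t}<-q_iV\bar\lambda\Delta t/\delta_h$, and \eqref{eq:assum1}--\eqref{eq:assum2} closing each case. Your explicit strengthened hypothesis $T_{i,t}\le\theta_i+\max_t\{\Delta T_{i,t}^c-\Delta T_{i,t}^d\}$ is what the paper's second upper-bound case uses without stating it, and the obstacle you anticipate does not arise: you never need to shift power out of $p_{i,t}^c$, because raising $p_{i,t}^h$ into any remaining slack, with the extra power taken from the unbounded $p_t^g$, already contradicts optimality; this gives saturation for whatever $p_{i,t}^{c\ast}$ is chosen, and $p_{i,t}^{c\ast}\le\bar p_{i,t}^c$ then yields the bound.
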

\begin{proof}
See Appendix A.
\end{proof}
In the assumption of Theorem \ref{theorem_1}, $\bar{\lambda}$, $\max_t\{ \Delta T_{i,t}^c - \Delta T_{i,t}^d\}$, and $\max_t\{ \Delta T_{i,t}^d - \Delta T_{i,t}^c\}$ can be estimated by historical data and problem parameters. Moreover, $T_i^l \leq T_{i,1} \leq \theta_i + \max_t\{ \Delta T_{i,t}^c - \Delta T_{i,t}^d\} \leq T_i^u$ can be met by enlarging the interval between the thermal limits $T_i^u$ and $T_i^l$. Intuitively, this broadened temperature regulation range offers more flexibility than a fixed setpoint approach, enabling the charging system to better accommodate various ambient temperature conditions.

\subsection{Optimality Guarantee}
 We can also characterize the optimality of our policy.
\begin{theorem}\label{theorem_2}
Denote the expected time-average cost under the proposed policy and optimal policy by $c^\ast$ and $\hat{c}$, respectively.
Then we have
    \begin{equation}
        c^\ast \leq \hat{c} + \frac{B}{V},
    \end{equation}
    where B is a constant.

\end{theorem}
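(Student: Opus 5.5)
The proof follows the standard drift-plus-penalty argument of \cite{neely2010stochastic}, applied to the bound \eqref{upper_bound}. First I collect all terms on the right-hand side of \eqref{upper_bound} that do not depend on the decisions into a constant $B$. This is possible because every such term is uniformly bounded. The terms $\sum_r \frac{\gamma}{r}{x_{max}^r}^2$ and $\frac12\sum_i\max[(\Delta T^c_{i,max})^2,(\Delta T^d_{i,max})^2]$ are constants. The arrival terms ${a_t^r}^2$ are bounded because per-slot demands and the number of EVs are bounded. For the cross terms $Q_t^{r+1}a_t^r$ and the differences ${Q_t^{r+1}}^2-{Q_t^r}^2$, I use the telescoping structure of the deadline-aware queues together with bounded backlogs: each $Q_t^r$ contains at most the demand of the EVs present, so these terms are bounded. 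The temperature drift term $-H_{i,t}\Delta T^d_{i,t}$ is bounded because Theorem~\ref{theorem_1} keeps $T_{i,t}\in[T_i^l,T_i^u]$, which bounds $H_{i,t}$. The remaining cross terms involving the decisions are $-\sum_r\frac{\gamma}{r}Q_t^rx_t^r$, $-\gamma Y_tx_t^1$ and $\sum_iH_{i,t}\Delta T^c_{i,t}$; these are exactly what $\mathbf{P3}$ minimizes. The inequality then reads $\Delta(\mathbf{\Theta}_t)+V\mathbb{E}[C_t\mid\mathbf{\Theta}_t]\le B+V\mathbb{E}[C_t]+\Phi_t(\text{policy})$, where $\Phi_t$ denotes the decision-dependent queue terms.

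Next, since the proposed policy minimizes the right-hand side over all feasible actions at every slot, its value is no larger than the value obtained by plugging in any other feasible policy. Here I invoke the existence of a stationary randomized policy $\omega$ (Neely's characterization, assuming i.i.d.\ or ergodic uncertainties with a boundary/slackness condition) that is independent of the current queue state. Such an $\omega$ attains expected cost $\hat c$ and satisfies the stability requirements of $\mathbf{P2}$ in expectation. Concretely, $\mathbb{E}[a_t^{r-1}]\le\mathbb{E}[x_t^{r,\omega}]$, the expected debt increment $\mathbb{E}[Q^1_t-x^{1,\omega}_t]\le0$, and $\mathbb{E}[\Delta T^{c,\omega}_{i,t}-\Delta T^d_{i,t}]=0$. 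Substituting $\omega$ turns the queue-weighted terms into products of the current backlogs with these expected differences, which are nonpositive, up to bounded remainders that I absorb into $B$. This gives $\Delta(\mathbf{\Theta}_t)+V\mathbb{E}[C_t^\ast]\le B+V\hat c$.

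Finally, I sum over $t=1,\dots,T$ and take expectations, so the drifts telescope to $\mathbb{E}[L(\mathbf{\Theta}_{T+1})]-\mathbb{E}[L(\mathbf{\Theta}_1)]$. Using $L\ge0$, dividing by $VT$, and letting $T\to\infty$ yields $c^\ast\le\hat c+B/V$, with the term $\mathbb{E}[L(\mathbf{\Theta}_1)]/(VT)$ vanishing in the limit.

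The main obstacle is the second step: comparing against the optimal policy. The queues in this problem are not standard. The charging demand queues shift index, and $H_{i,t}$ can be negative and is coupled to the state-dependent caps \eqref{eq:temp_charging_limit}--\eqref{eq:temp_heating_limit}, so a state-independent $\omega$ is not automatically feasible. My first approach is to define $\omega$ against the worst-case (lowest-temperature) caps, which are guaranteed by Theorem~\ref{theorem_1}, so that it is feasible in every state. I would then absorb the resulting suboptimality into $\hat c$ by taking $\hat c$ as the optimum of $\mathbf{P2}$ under those caps, and carry the bounded $H$-terms into $B$.
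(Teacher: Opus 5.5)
Your overall argument matches the paper's proof: plug a queue-independent benchmark into the right-hand side of \eqref{upper_bound}, absorb bounded terms into $B$, let the temperature cross-term vanish in expectation, then telescope and divide by $VT$. The problem is your second step, which handles the charging and debt terms with two conditions that do not work as written.

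\begin{itemize}
\item The condition $\mathbb{E}[Q_t^1-x_t^{1,\omega}]\le 0$ cannot be a property of a state-independent $\omega$. $Q_t^1$ is the backlog produced by the proposed policy. Conditioned on $\mathbf{\Theta}_t$, the term $\gamma Y_t(Q_t^1-\mathbb{E}[x_t^{1,\omega}])$ is nonpositive only if $\omega$ happens to clear that backlog.
\item The condition $\mathbb{E}[a_t^{r-1}]\le\mathbb{E}[x_t^{r,\omega}]$ compares arrivals into one deadline class with service of another, and feasibility or stability does not imply it. It fails, for example, when every EV arrives with the same parking time $r_0$: class $r_0+1$ is then always empty, so $x_t^{r_0+1}=0$.
\end{itemize}

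The paper needs neither condition. It bounds every positive decision-free term, including the ${Q_t^1}^2$ and $Y_tQ_t^1$ terms, by $Q^r_{max}$, $Y_{max}$, $a^r_{max}$ and $x^r_{max}$, and puts them into $B$. It then drops $-\sum_r\frac{\gamma}{r}Q_t^r\hat x_t^r-\gamma Y_t\hat x_t^1$ as nonpositive. Your first step already assumes bounded backlogs, so the repair is immediate. The only property of the benchmark the paper actually uses is that $\mathbb{E}[\Delta\hat T^c_{i,t}-\Delta T^d_{i,t}]=0$, independently of $H_{i,t}$.

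Keep the $H$ bookkeeping consistent as well. If you move $-H_{i,t}\Delta T^d_{i,t}$ into $B$, the leftover $H_{i,t}\Delta T^{c,\omega}_{i,t}$ no longer has zero mean and must itself be bounded. A bound on $|H_{i,t}|$ from Theorem~\ref{theorem_1} involves $\theta_i$, which grows linearly in $V$, and is uniform only when $V\le V_{max}$. The zero-mean route avoids this.

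Your concern about the state-dependent caps is legitimate, and the paper's proof simply ignores it. It plugs $\hat p^c_{i,t},\hat p^h_{i,t}$ into $\mathbf{P3}$ without checking that they respect $\bar p^c_{i,t},\bar p^h_{i,t}$ evaluated at the proposed policy's temperatures. Your remedy works, but it has costs.
\begin{itemize}
\item The benchmark becomes the optimum under state-independent caps. That optimum is no smaller than $\hat c$, so you prove a weaker inequality than the one stated.
\item You import the hypotheses of Theorem~\ref{theorem_1} (the choice of $\theta_i$ and $V\le V_{max}$), which Theorem~\ref{theorem_2} does not assume.
\item The worst case is not the lowest temperature for both caps. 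By \eqref{eq:temp_heating_limit} the heating cap decreases in $T_{i,t}$, so it must be frozen at $T_i^u$. Only the charging cap is frozen at $T_i^l$.
\end{itemize}
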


\begin{proof}
See Appendix B.
\end{proof}
We see that the performance of the proposed policy is affected by the value of $V$. A larger value of $V$ leads to lower cost at the expense of weaker control over the queue stability, as $V$ adjusts the importance of the energy cost in the objective function of $\mathbf{P3}$.

\section{Numerical Simulations}
\label{sec:case}

\subsection{Simulation Setup}
This section presents a performance evaluation of the proposed online EV charging control policy in comparison with following representative schemes built upon the state-of-the-art techniques.
\begin{itemize}
    \item \textbf{Offline}: The offline scheme is obtained by solving $\mathbf{P1}$ with perfect future information of all uncertainties. Thus, it represents an ideal reference. The offline scheme enforces a temperature constraint between 0 $^\circ$C and 20 $^\circ$C.
    \item \textbf{Proposed}: The proposed scheme controls the charging and heating in a coordinated manner. It aims to maintain the temperature within a range of  0 $^\circ$C to 20 $^\circ$C.
    \item \textbf{B1}: The B1 controls the charging and heating separately. To be specific, the B1 employs the method in \cite{EV_queue} to perform EV charging control. For battery thermal management, it aims to regulate the set point of the EV battery temperature within a comfort range of 9.5 $^\circ$C to 10.5 $^\circ$C, by using the control scheme in \cite{temp_set_point}.  This heating control strategy is also called bang-bang control\cite{hazan2022introduction}.
    \item \textbf{B2}: The charging control in B2 allows all EVs to charge at their time-varying peak charging rate, and its heating control is the same as that of B1.
    \item \textbf{NoHeat}: This scheme applies the smart charging method in \cite{EV_queue} for charging control in the absence of EV battery temperature regulation.
\end{itemize}

We discretize the 24-hour horizon into 5-minute slots. We adopt the EV arrival and departure times of workplace charging sessions from the Caltech ACN dataset~\cite{ACN_dataset}, which includes 45 individual EVs.
 We use the real-world electricity prices~\cite{price_data}, and PV generation data~\cite{pv_data}. The initial SoC of EVs is randomly generated from [0.1, 0.3]. The departure SoC of all EVs is set to 0.9. The charging efficiency $\delta_c$ is 0.95. For simplicity, we test our algorithm with identical EV parameters. However, our method can easily tackle different EV battery characteristics. The capacity of all EV batteries is set to 50 kWh. The parameters introduced due to battery thermal management are adopted directly from the settings in \cite{ruan_temperature}, i.e., $\delta_h = 0.8$, $\bar{p}_i^c = 4.8$, $\beta_i^c = 0.12$, $\bar{p}_i^h = 3.0$, and $\beta_i^h = 0.024$. The total power limit of heating and charging $\bar{p}_i$ is 7.4 kW. In our setting, $q_i$ is 0.72 and $\eta_i$ is 0.048. The parameter settings of our policy are $V$ = 600, and $\gamma = 20$. We use the Canadian ambient temperature data, which can be retrieved from the website: https://climate.weather.gc.ca.

\subsection{Performance Analysis}

\begin{figure}[!htpb]
    \centering
    \includegraphics[width = 0.8\linewidth]{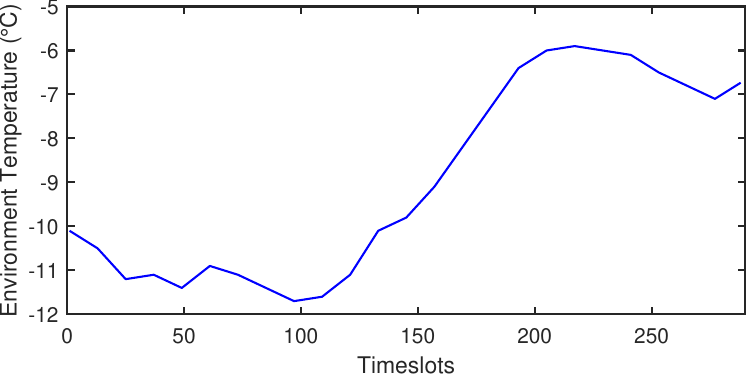} 
    \vspace{-1em}
    \caption{Ambient temperature profile on a specific day.}
    \label{fig:envir_tmp_profile}
\end{figure}

In this subsection, we conduct a case study to validate the benefits of our coordinated charging and heating control scheme. The ambient temperature data we use, is shown in Fig.~\ref{fig:envir_tmp_profile}. In TABLE~\ref{tab:method_comp}, we present four metrics to compare the performance of different methods, which are total cost, fulfillment ratio, cost index and heating ratio. The total cost refers to the total energy cost of 45 EVs for purchasing electricity from the grid; the fulfillment ratio is defined as the ratio of the total energy charged into all EV batteries to the total EV energy demands; the cost index, calculated as the total cost divided by the fulfillment ratio, reflects the system cost incurred per 1\% of the charging demands fulfilled; the heating ratio is defined as the proportion of the energy for heating all EV batteries in the total energy consumed for both charging and heating. 

\begin{figure*}[!htpb]
    \centering
    \includegraphics[width = 0.85\linewidth]{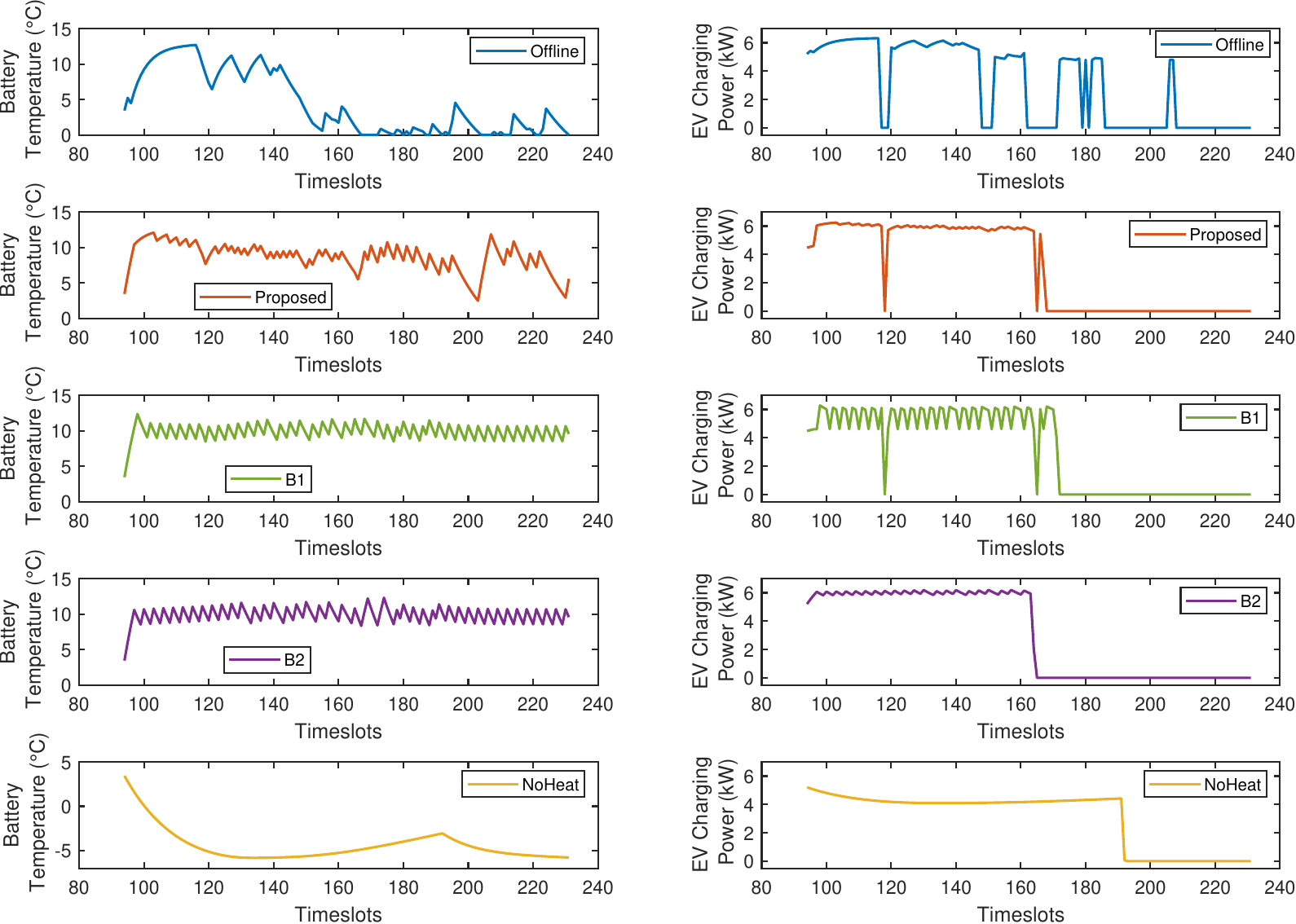} 
    \vspace{-1em}
    \caption{EV battery temperature and charging power profiles under different methods.}
    \label{fig:temp_charging_profile}
\end{figure*}

TABLE~\ref{tab:method_comp} shows that offline scheme achieves the best performance in terms of all metrics, as it has perfect knowledge of all future uncertainties. Our proposed method achieves a cost that is 12.2\% lower than B1 and 17.3\% lower than B2, demonstrating a significant advantage in terms of total cost. Such a cost saving arises from lower heating ratio and, more importantly, from the coordinated charging and heating control paradigm.
Since B2 does not respond to time-varying energy prices, it maintains a slight advantage in fulfillment ratio over ours. However, it has the highest cost index, showing poor economic efficiency in energy utilization. NoHeat achieves the lowest fulfillment ratio since the absence of battery thermal management leads to the slow charging rate. It is worth noting that despite consuming more energy for heating purposes, our approach fulfills more charging demands while incurring lower costs compared to the NoHeat. This intriguing observation can be attributed to the fact that in addition to enhancing the charging rate, coordinated charging and heating enables the charging station to strategically respond to price signals, resulting in lower total cost.

\begin{table}[!htpb]
\small
    \centering
    \caption{Results under different methods on a specific day}
    \vspace{-0.5em}
    \begin{tabular}{@{}ccccccccc@{}}
        \toprule
   \multirow[c]{2}{*}{Method} &  {Total} & {Fulfillment} & {Cost} & {Heating}  \\
       & Cost (\$) & Ratio (\%) & Index (\$/\%)& Ratio (\%)  \\
        \midrule
        Offline & 76.30  & 99.65 &0.7657 &12.63\\
        Proposed & 91.44  & 98.73 & 0.9262 &16.47\\
        B1 & 104.19  & 98.68 &1.0558 &18.15 \\
        B2 & 110.58  & 99.30 &1.1136 &18.01 \\
        NoHeat & 101.07  & 96.72 &1.0450 &0.00 \\
        \bottomrule
        \label{tab:method_comp}
   \end{tabular}
\end{table}

In Fig.~\ref{fig:temp_charging_profile}, we show the battery temperature and charging power profiles of one individual EV (randomly selected from 45 EVs) under different methods. The offline scheme exhibits irregular profiles in both temperature and charging power. Our method is able to maintain the temperature within the predefined range of of  0 $^\circ$C to 20 $^\circ$C, confirming the validity of Theorem~\ref{theorem_1}. Compared with B1, our method presents a less fluctuating charging power profile due to coordinated charging and heating. B1 and B2 exhibit similar temperature profiles, as they employ the same heating control strategy. Without heating, the temperature of NoHeat stays at a low level, which in turn leads to the slow charging rate.

\subsection{Sensitivity Analysis}
This subsection conduct sensitivity analysis by exploring the impact of several factors on the performance of our method.

We change the value of parameter $V$ and parameter $\gamma$ to observe their effects, and the corresponding results are shown in Fig.~\ref{fig:V_gamma_impact}. We see that as $V$ increases, the total cost decreases. This can be understood from the fact that $V$ controls the importance of the energy cost in the objective function of $\mathbf{P3}$. This observation is also consistent with the conclusion in Theorem \ref{theorem_2}. Then we adjust the value of parameter $\gamma$ to evaluate its effect. It can be seen that an increase in parameter $\gamma$ leads to an increase in the fulfillment ratio.
Such a phenomenon arises from the influence of $\gamma$ in the Lyapunov function \eqref{eq:lya_function}, namely, to modulate the relative significance of charging queues within the decision cost. Therefore, when $\gamma$ becomes larger, our policy tends to perform stronger control over the charging queue stability, leading to better performance in terms of fulfillment ratio.

\begin{figure}[!htpb]
    \centering
    \includegraphics[width = 0.9\linewidth]{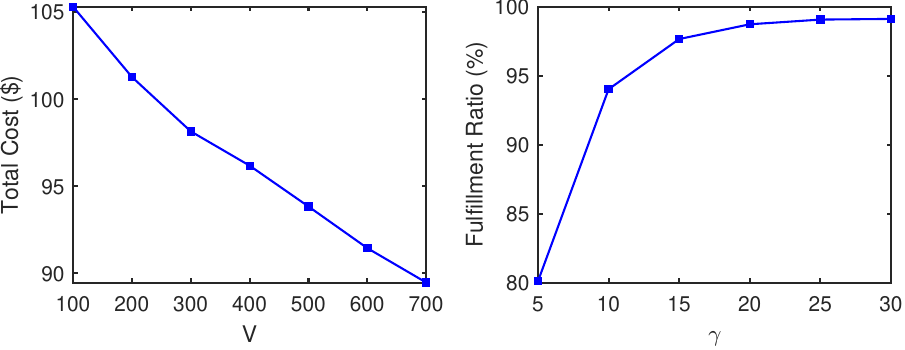} 
    \vspace{-1em}
    \caption{Impact of parameter V and parameter $\gamma$.}
    \label{fig:V_gamma_impact}
\end{figure}


To generate more scenarios, the entire temperature profile in Fig.~\ref{fig:envir_tmp_profile} is shifted by different offsets within a range of -12 $^\circ$C to 4 $^\circ$C, and each translated version is tested. In Fig.~\ref{fig:temp_shift_impact}, we report the results of all online schemes for comparison. When the environment becomes colder, the proposed policy, B1 and B2 can maintain a high level of fulfillment ratio owing to their battery thermal management strategies. 
One can observe the cost index and heat ratio of the proposed policy, B1 and B2 steadily increase as the ambient temperature decreases. Notably, our method consistently outperforms B1 and B2 in terms of cost index throughout the temperature variation, indicating that our method is both climate-resilient and economically efficient.
However, as the temperature decreases, the fulfillment ratio of NoHeat is obviously dropping down due to lack of battery heating. While achieving the lower cost index than B1 and B2 when the ambient temperature drops down, NoHeat sacrifices a considerable amount of charging demands.

To further show the robustness of our policy, we simulate three consecutive days in January, whose temperature profiles are shown in Fig.~\ref{fig:impact_envir_temp_profil}. We present the relevant results in TABLE~\ref{tab:method_comp_three} with comparison. Similar conclusions can be drawn as those observed in TABLE~\ref{tab:method_comp}. Specifically, our method shows strong climate resilience as well as superior energy-economic performance.

\begin{figure}[!htpb]
    \centering
    \includegraphics[width = 0.85\linewidth]{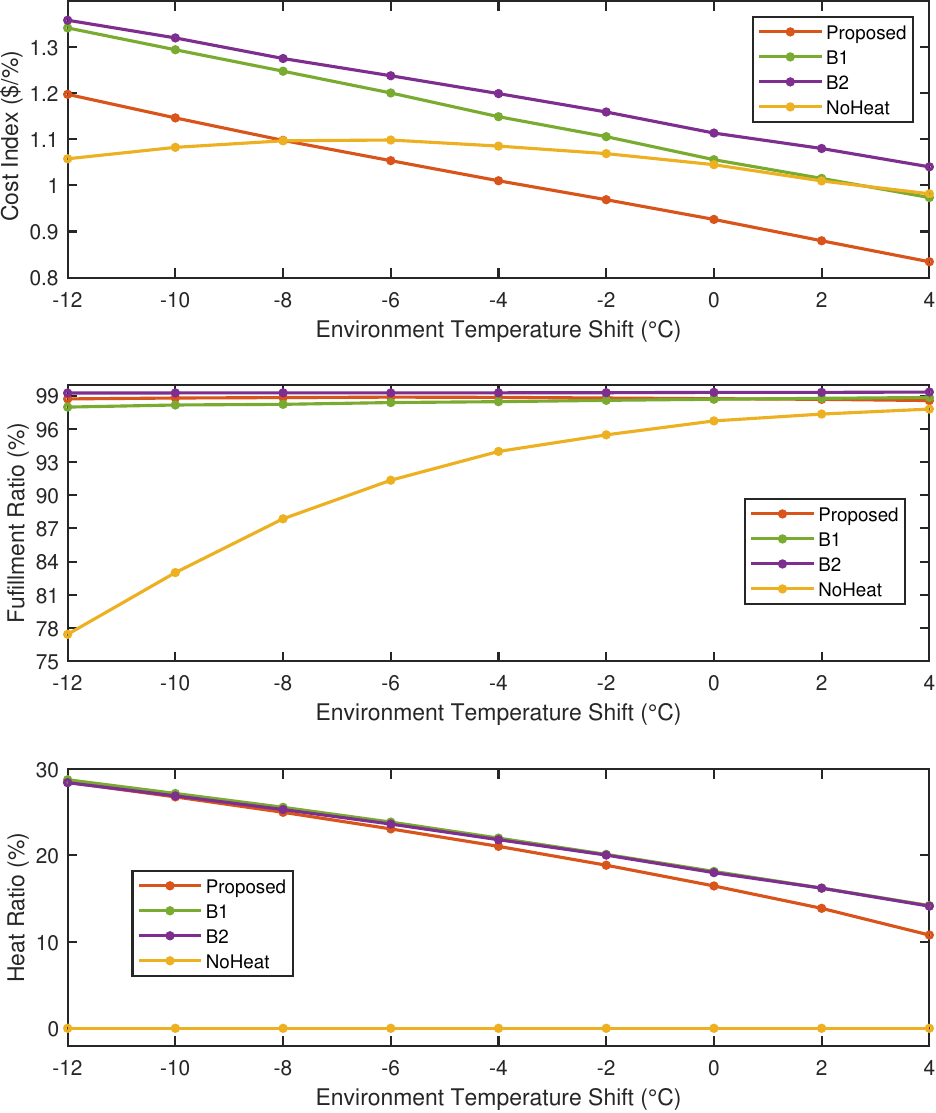} 
    \vspace{-1em}
    \caption{Impact of temperature on different online charging schemes.}
    \label{fig:temp_shift_impact}
\end{figure}

\begin{figure}[!htpb]
    \centering
    \includegraphics[width = 0.8\linewidth]{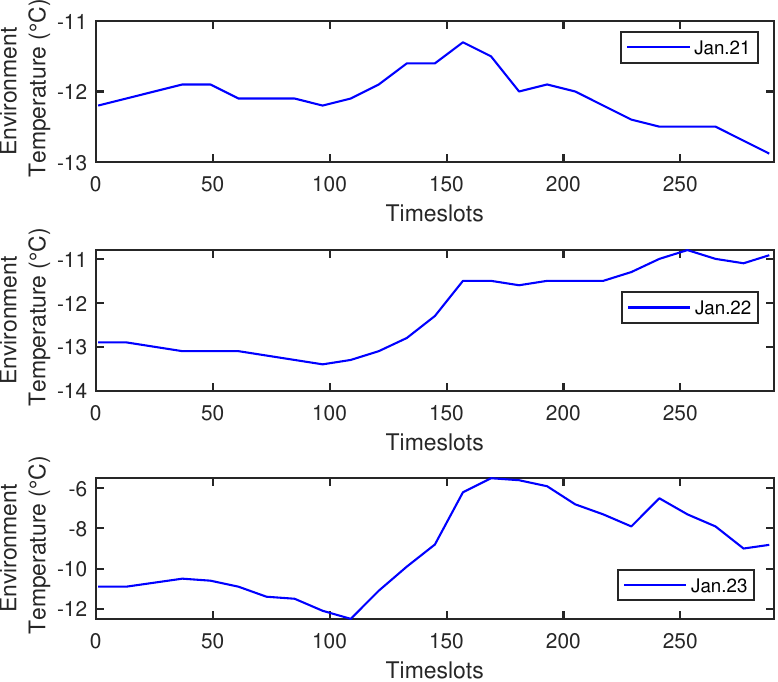} 
    \vspace{-1em}
    \caption{Ambient temperature profiles for three consecutive days.}
    \label{fig:impact_envir_temp_profil}
\end{figure}
\begin{table}[!htpb]
\small
    \centering
    \caption{Results under different methods for three consecutive days}
    \vspace{-0.5em}
    \begin{tabular}{@{}ccccccccc@{}}
        \toprule
  \multirow[c]{2}{*}{Date}& {Average} & \multirow[c]{2}{*}{Method} &  {Cost} & {Fulfillment }  \\
 &{Temperature}&  &Index (\$/\%) & Ratio (\%)\\
        \midrule
       \multirow[c]{5}{*}{Jan.21} & \multirow[c]{5}{*}{-12.0 $^\circ$C} & Offline & 0.8463  & 99.54\\
       & & Proposed & 1.0485  & 99.09 \\
       & & B1 & 1.1666  & 98.45  \\
       & & B2 & 1.2176  & 99.26  \\
       & & NoHeat & 1.0484  & 94.06 \\
       \midrule
       \multirow[c]{5}{*}{Jan.22} & \multirow[c]{5}{*}{-12.2 $^\circ$C} & Offline & 0.8403  & 99.54 \\
      & & Proposed & 1.0382  & 99.07 \\
      & & B1 & 1.1609  & 98.51  \\
      & & B2 & 1.2095  & 99.26  \\
      & & NoHeat & 1.0604  & 94.11 \\
       \midrule
       \multirow[c]{5}{*}{Jan.23} & \multirow[c]{5}{*}{-9.0 $^\circ$C}& Offline & 0.7642  & 99.65 \\
      & & Proposed & 0.9486  & 98.73 \\
      & & B1 & 1.0581  & 98.68  \\
      & & B2 & 1.117  & 99.30  \\
      & & NoHeat & 1.0184  & 96.72 \\
        \bottomrule
        \label{tab:method_comp_three}
   \end{tabular}
\end{table}

\subsection{Case Study Under an Extremely Cold Environment}
We validate the robustness of our policy under an extremely cold environment. We conduct a case study under the temperature profile depicted in Fig.~\ref{fig:extreme_envir_tmp_profile}, and the results are presented in TABLE~\ref{tab:method_comp_extreme}. In such an extremely cold environment, offline scheme, our method, B1 and B2 adaptively allocate more energy for battery thermal management, resulting in the heating ratio over 30\%. Compared with B1 and B2, our method still maintains an advantage in terms of cost index, although this advantage is less pronounced under the extremely cold condition. The fulfillment ratio of NoHeat is even below 50\%, indicating the necessity of battery thermal management when EVs are charged in extremely cold environments.

\begin{figure}[!htpb]
    \centering
    \includegraphics[width = 0.8\linewidth]{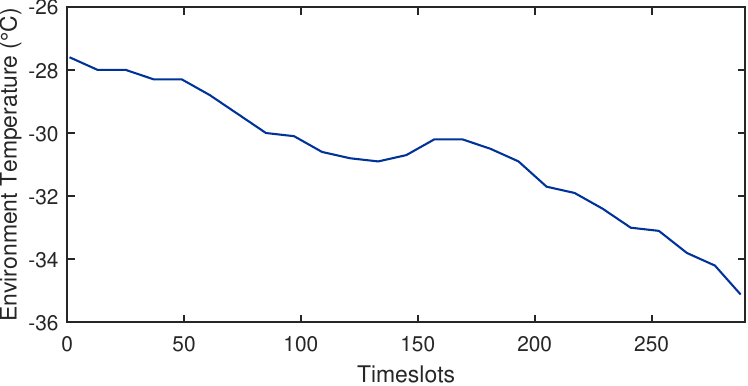} 
    \vspace{-1em}
    \caption{Ambient temperature profile under an extremely cold condition.}
    \label{fig:extreme_envir_tmp_profile}
\end{figure}

\begin{table}[!htpb]
\small
    \centering
    \caption{Results under different methods under an extremely cold condition}
    \vspace{-0.5em}
    \begin{tabular}{@{}ccccccccc@{}}
        \toprule
   \multirow[c]{2}{*}{Method} &  {Total} & {Fulfillment} & {Cost} & {Heating}  \\
       & Cost (\$) & Ratio (\%) & Index (\$/\%)& Ratio (\%)  \\
        \midrule
        Offline & 122.14  & 98.62 &1.2384 &31.70\\
        Proposed & 149.47  & 97.65 & 1.5308 &36.60\\
        B1 & 157.89  & 97.08 &1.6264 &36.52 \\
        B2 & 159.03  & 99.18 &1.6034 &36.00 \\
        NoHeat & 33.17  & 45.94 &0.7220 &0.00 \\
        \bottomrule
        \label{tab:method_comp_extreme}
   \end{tabular}
\end{table}

\section{Conclusion}
\label{sec:conclusion}
In this paper, we propose a novel algorithm to address the EV charging control problem in cold environments. The most distinctive feature of our algorithm is its ability to achieve coordinated control of charging and heating without any prior knowledge of uncertainties. Through a queue-based remodeling of the system dynamics, we achieve such coordinated control under the theoretical framework of Lyapunov optimization, where the queue stability conditions serve as the foundation for achieving joint control of charging and heating. Numerical results demonstrate the effectiveness of our algorithm and reveal the following findings:
\begin{enumerate}
    \item The proposed coordinated control algorithm effectively satisfies the charging demands in cold environments, while maintaining advantages in cost savings.
    \item With decreasing ambient temperature, both heating energy consumption and total system cost increase.
    \item Battery thermal management during charging is crucial in cold environments, and active battery heating is particularly important under extremely cold conditions.
\end{enumerate}


In future work, we are interested in exploring more complex battery characteristics. Moreover, designing a coordinated charging and heating scheme for cold environments under fast charging scenarios is also a  direction.



\renewcommand\theequation{\Alph{section}.\arabic{equation}} 
\counterwithin*{equation}{section} 
\renewcommand\thefigure{\Alph{section}\arabic{figure}} 
\counterwithin*{figure}{section} 
\renewcommand\thetable{\Alph{section}\arabic{table}} 
\counterwithin*{table}{section} 

\bibliographystyle{IEEEtran}
\bibliography{references}


\begin{appendices}
\section{Proof of theorem \ref{theorem_1}}\label{appendix_A}
Let $p_{i,t}^{c\ast}$ and $p_{i,t}^{h\ast}$ be the optimal solution of $\mathbf{P3}$.

    We first prove the upper bound condition (i.e., $T_{i,t} \leq T_i^u$) using mathematical induction. We see that the upper bound condition holds at time 1 as we have $T_{i,1} \leq \theta_i + \max_t\{ \Delta T_{i,t}^c - \Delta T_{i,t}^d\} \leq T_i^u $. 
    Now we assume the upper bound condition holds at time $t$.
    \begin{enumerate}
        \item Suppose $T_{i,t} \leq \theta_i$. Since $\Delta T_{i,t}^c - \Delta T_{i,t}^d \leq \max_t\{\Delta T_{i,t}^c - \Delta T_{i,t}^d\}$, we obtain that $T_{i,t+1} \leq \theta_i + \max_t\{ \Delta T_{i,t}^c - \Delta T_{i,t}^d\} \leq T_i^u $.
        \item Suppose $T_{i,t} > \theta_i$. Then we have $H_{i,t} = T_{i,t} - \theta_i >0$, and thus we must have $p_{i,t}^{h\ast} = 0$. If $p_{i,t}^{h\ast} > 0$, then we can always decrease $p_{i,t}^{h\ast}$ to further decrease the objective value while still satisfying all the constraints by lowing the energy supply.
        Hence, $T_{i,t+1} = T_{i,t} + (1-\delta_c)p_{i,t}^{c\ast}/q_i - \Delta T_{i,t}^d \leq T_{i,t} +  (1-\delta_c)\bar{p}_{i,t}^{c}/q_i - \Delta T_{i,t}^d \leq T_{i,t} \leq \theta_i + \max_t\{ \Delta T_{i,t}^c - \Delta T_{i,t}^d\} \leq T_i^u$
    \end{enumerate}
    Then we should prove the lower bound condition $T_i^l \leq T_{i,t}$. Assume the lower bound condition holds at time $t$.
    \begin{enumerate}
        \item Suppose $T_{i,t} \geq T_i^l + \max_t\{\Delta T_{i,t}^d - \Delta T_{i,t}^c \}$. According to \eqref{eq:themal_dyna}, we have $T_{i,t+1} \geq T_{i,t} - \max_t\{\Delta T_{i,t}^d - \Delta T_{i,t}^c \} \geq T_i^l$.
        \item Suppose $T_{i,t} < T_i^l + \max_t\{\Delta T_{i,t}^d - \Delta T_{i,t}^c \}$. From the objective function of $\mathbf{P3}$, we obtain the following:
        \begin{equation}
            V\lambda_t\Delta t + \delta_h(T_{i,t} - \theta_i)/q_i < V\lambda_t\Delta t - V\bar{\lambda}\Delta t \leq 0
        \end{equation}
        In this case, the objective function of the linear programming $\mathbf{P3}$ is strictly decreasing with respect to variable $p_{i,t}^h$, and thus we must have $p_{i,t}^{h\ast} = \min[{p}_{i,t}^h,\bar{p}_{i,t} -{p}_{i,t}^{c\ast} ]$ from the constraints \eqref{eq:rt_cons_2} and \eqref{eq:rt_cons_3} . Hence, we have $T_{i,t+1} = T_{i,t} + (\delta_h \min[\bar{p}_{i,t}^h,\bar{p}_{i,t} - p_{i,t}^{c\ast}] + (1 - \delta_c)p_{i,t}^{c\ast})/q_i - \Delta T_{i,t}^d \geq T_{i,t} + (\delta_h \min[\bar{p}_{i,t}^h,\bar{p}_{i,t} - \bar{p}_{i,t}^{c}])/q_i - \Delta T_{i,t}^d \geq T_{i,t}$, which means the temperature will not decrease whenever $T_{i,t} < T_i^l + \max_t\{\Delta T_{i,t}^d - \Delta T_{i,t}^c \}$. Therefore, we have $T_{i,t+1} \geq T_{i,t} \geq T_i^l$.
    \end{enumerate}
    This completes the proof.
    
\section{Proof of theorem \ref{theorem_2}}\label{appendix_B}
     Denote  the corresponding charging and heating decisions of the optimal policy by  $\hat{p}_{i,t}^{c}$ and $\hat{p}_{i,t}^{h}$. 
Since $p_{i,t}^{c\ast}$ and $p_{i,t}^{h\ast}$ are the optimal solutions of $\mathbf{P3}$, we have the following inequality
       \begin{align*}
       & \quad \; \Delta (\mathbf{{\Theta}}_t^\ast) + VC_t^\ast \leq  V\hat{C}_t \\
       & + \frac{1}{2} \left  ( \sum_{r = 1}^{R} \frac{\gamma}{r+1} ({Q_{t}^{r+1}}^2 - {Q_{t}^r}^2)  \right.\\
       & + \sum_{r = 1}^{R} \frac{\gamma}{r+1} ({a_t^r}^2 +2Q_t^{r+1}a_t^r) +\gamma {Q_{t}^1}^2 \\
       & \left. +  \sum_{r = 1}^{R} \frac{\gamma}{r} {x_{max}^{r}}^2 - 2\sum_{r = 1}^{R} \frac{\gamma}{r}Q_{t}^{r}\hat{x}_{t}^{r} + 2\gamma Y_{t}(Q_{t}^1 - \hat{x}_{t}^1)\right)\\
       & + \frac{1}{2} \sum\nolimits_i \max [(\Delta T_{i,max}^c)^2,(\Delta T_{i,max}^d)^2]\\
       & + \sum\nolimits_i \left ( H_{i,t}(\Delta \hat{T}_{i,t}^c - \Delta T_{i,t}^d)\right) \\
       & \leq V\hat{C}_t + \frac{\gamma}{2(R+1)}{Q_{max}^{R+1}}^2 + \frac{\gamma}{4}{Q_{max}^{1}}^2  \stepcounter{equation} \tag{\theequation}\\
       & + \sum_{r = 1}^{R} \frac{\gamma}{2(r+1)} ({a_{max}^r}^2 +2Q_{max}^{r+1}a_{max}^r) \\
       & +  \sum_{r = 1}^{R} \frac{\gamma}{2r} {x_{max}^{r}}^2 +\gamma Y_{max}Q_{max}^1 - \gamma Y_{t}\hat{x}_{t}^1 -\sum_{r = 1}^{R} \frac{\gamma}{r}Q_{t}^{r}\hat{x}_{t}^{r} \\
       & +  \frac{1}{2} \sum\nolimits_i \max [(\Delta T_{i,max}^c)^2,(\Delta T_{i,max}^d)^2]\\
       & +  \sum\nolimits_i \left ( H_{i,t}(\Delta \hat{T}_{i,t}^c - \Delta T_{i,t}^d)\right) \\
       & = V\hat{C}_t + B - \gamma Y_{t}\hat{x}_{t}^1 -\sum_{r = 1}^{R} \frac{\gamma}{r}Q_{t}^{r}\hat{x}_{t}^{r} \\
       & + \sum\nolimits_i \left ( H_{i,t}(\Delta \hat{T}_{i,t}^c - \Delta T_{i,t}^d)\right) \\
       & \leq V\hat{C}_t + B + \sum\nolimits_i \left ( H_{i,t}(\Delta \hat{T}_{i,t}^c - \Delta T_{i,t}^d)\right).
    \end{align*}

We take expectations on both sides of the above equation.
\begin{subequations}
    \begin{align}
        & \qquad \mathbb{E}[\Delta (\mathbf{{\Theta}}_t^\ast)] + V\mathbb{E}[C_t^\ast]\\
        &\leq  V\mathbb{E}[\hat{C}_t] + B + \mathbb{E}\left[\sum\nolimits_i \left ( H_{i,t}(\Delta \hat{T}_{i,t}^c - \Delta T_{i,t}^d)\right) \right] \\
        & = V\mathbb{E}[\hat{C}_t] + B.
    \end{align}
\end{subequations}
    The last step stems from the independence of the optimal policy on queues and the stability of virtual queues.

Summing the above equation over all time slots $t \in \{1,2,..., T\}$, and dividing both two sides by $VT$, letting $T$ go to infinity, then we have:
\begin{equation}
    \begin{aligned}
       &\lim_{T \to \infty}\frac{1}{VT}\mathbb{E}[L(\mathbf{\Theta}_{T+1}^\ast)-L(\mathbf{\Theta}_{1}^\ast)] + \lim_{T \to \infty}\frac{1}{T}\sum_{t=1}^{T}\mathbb{E}[C_t^ \ast]\\
        & \leq \frac{B}{V}+ \lim_{T \to \infty}\frac{1}{T}\sum_{t=1}^{T}\mathbb{E}[\hat{C}_t^\ast].
    \end{aligned}
\end{equation}

Thus, we can obtain:
\begin{equation}
    {c}^\ast \leq  \hat{c}^\ast + \frac{B}{V},
\end{equation}
with 
\begin{align*}
        & B \stackrel{\triangle}{=}  \frac{\gamma}{2(R+1)}{Q_{max}^{R+1}}^2 + \frac{\gamma}{4}{Q_{max}^{1}}^2 +\gamma Y_{max}Q_{max}^1  \\
       & + \sum_{r = 1}^{R} \frac{\gamma}{2(r+1)} ({a_{max}^r}^2 +2Q_{max}^{r+1}a_{max}^r)  +  \sum_{r = 1}^{R} \frac{\gamma}{2r} {x_{max}^{r}}^2\\
       & +  \frac{1}{2} \sum\nolimits_i \max [(\Delta T_{i,max}^c)^2,(\Delta T_{i,max}^d)^2], \stepcounter{equation} \tag{\theequation}
\end{align*}
where $Q_{max}^{r}$, $Y_{max}$, $a_{max}^r$, $x_{max}^r$, $\Delta T_{i,max}^c$, and $\Delta T_{i,max}^d$ are the upper bounds of $Q_t^r$, $Y_t$, $a_t^r$, $x_t^r$, $ \Delta T_{i,t}^c$, and $ \Delta T_{i,t}^d$.

The proof is completed.

\end{appendices}

\end{document}